\definecolor{webgreen}{rgb}{0,.5,0}
\definecolor{webbrown}{rgb}{.6,0,0}
\newcommand{\seqnum}[1]{\href{http://www.research.att.com/cgi-bin/access.cgi/as/~njas/sequences/eisA.cgi?Anum=#1}{\underline{#1}}}
\begin{document}


\begin{center}
\vskip 1cm{\LARGE\bf Transforming Recurrent Sequences by Using \\
\vskip .1in
the Binomial and Invert Operators}
\vskip 1cm
\large
Stefano Barbero, Umberto Cerruti, and Nadir Murru\\
Department of Mathematics \\
University of Turin \\
via Carlo Alberto 8/10 \\
Turin \\
Italy \\
\href{mailto:stefano.barbero@unito.it}{\tt stefano.barbero@unito.it}\\
\href{mailto:umberto.cerruti@unito.it}{\tt umberto.cerruti@unito.it}\\
\href{mailto:nadir.murru@unito.it}{\tt nadir.murru@unito.it}\\
\end{center}

\theoremstyle{plain}
\newtheorem{theorem}{Theorem}
\newtheorem{corollary}[theorem]{Corollary}
\newtheorem{lemma}[theorem]{Lemma}
\newtheorem{proposition}[theorem]{Proposition}

\theoremstyle{definition}
\newtheorem{definition}[theorem]{Definition}
\newtheorem{example}[theorem]{Example}
\newtheorem{conjecture}[theorem]{Conjecture}

\theoremstyle{remark}
\newtheorem{rema}[theorem]{Remark}

\begin{abstract}  
In this paper we study the action of the Binomial and Invert
(interpolated) operators on the set of linear recurrent sequences.  We
prove that these operators preserve this set, and we determine how they
change the characteristic polynomials. We show that these operators,
with the aid of two other elementary operators (essentially the left
and right shifts), can transform any impulse sequence (a linear
recurrent sequence starting from $(0,\ldots,0,1)$) into any other
impulse sequence, by two processes that we call \emph{construction} and
\emph{deconstruction}. Finally, we give some applications to polynomial
sequences and pyramidal numbers. We also find a new identity on
Fibonacci numbers, and we prove that $r$--bonacci numbers are a Bell
polynomial transform of the $(r-1)$--bonacci numbers.
\end{abstract}

\section{Introduction}
A rich research field arises from investigation on the operators acting over sequences. In particular, two of these operators, Binomial and Invert, have been deeply studied. So we want to point out some new aspects, giving an in--depth examination of their action  over the set of linear recurrent sequences.      
\begin{definition}
We define, over an integral domain $R$, the set $\mathcal{S}(R)$ of sequences $a=(a_n)_{n=0}^{+\infty}$,  and the set $\mathcal{W}(R)$ of linear recurrent sequences.
\end{definition}

\begin{definition} \label{I-L}
We recall the definition of the well-known operators $I$ and $L$, called \emph{Invert} and \emph{Binomial} respectively.
They act on elements of $\mathcal{S}(R)$ as follows:
\begin{equation*} 
I(a)=b, \ \ \ \sum_{n=0}^{\infty}b_nt^n=\cfrac{\sum_{n=0}^{\infty}a_nt^n}{1-t\sum_{n=0}^{\infty}a_nt^n}
\end{equation*}
\begin{equation*} 
L(a)=c, \ \ \ c_n=\sum_{i=0}^{n}\binom{n}{i}a_i\quad.
\end{equation*}
\end{definition}
The operators $I$ and $L$ can be \emph{iterated} \cite{Spivey} and \emph{interpolated} \cite{Bacher}, becoming $I^{(x)}$ and $L^{(y)}$,  as we show in the next two definitions.

\begin{definition}\label{invert}
The \emph{Invert interpolated operator} $I^{(x)}$, with parameter $x \in R$, transforms any sequence $a\in\mathcal{S}(R)$, having ordinary generating function $A(t)$, into a sequence $b=I^{(x)}(a) \in \mathcal{S}(R)$ having ordinary generating function
\begin{equation}\label{invertgen}
B(t)=\cfrac{A(t)}{1-xtA(t)}\quad.  
\end{equation}
\end{definition}
\begin{definition} \label{binomial}
The \emph{Binomial interpolated operator}$L^{(y)}$, with parameter $y \in R$, transforms any sequence $a\in\mathcal{S}(R)$ into a sequence $c=L^{(y)}(a)\in \mathcal{S}(R)$, whose terms are 
\begin{equation*}
c_n=\sum_{i=0}^{n}\binom{n}{i}y^{n-i}a_i \quad.
\end{equation*}
\end{definition}
For our purposes we also introduce the following two operators
\begin{definition} \label{sigma-rho}
The \emph{right-shift operator} $\sigma$ and the \emph{left-shift operator} $\rho$ change any sequence $a\in\mathcal{S}(R)$ as follows:
\begin{equation*} 
\sigma(a)=(a_1,a_2,a_3,\ldots)\quad \rho(a)=(0,a_0,a_1,a_2,\ldots)\quad.
\end{equation*}
\end{definition}
\begin{rema}\label{invertrec}
The sequence $b=I^{(x)}(a)$ is characterized by the following recurrence
\begin{equation}\label{invertreceq}
\left\{ \begin{array}{l}
b_n  = a_n  + x\sum\limits_{j = 0}^{n - 1} a_{n - 1 - j} b_j; \\ 
b_0  = a_0.\\ 
\end{array} \right. 
\end{equation}
This relation is a straightforward consequence of Definition \ref{invert}. In fact, if we consider the sequence
$u=(1,0,0,\ldots)$ and the convolutional product $*$ between sequences, (\ref{invertgen}) becomes 
$$b = a* (u - x\rho(a))^{ - 1},$$
where $(u-x\rho(a))^{-1}$ is the convolutional inverse of $u-x\rho(a)$.
So 
$$b=a+x\rho (a)*b,$$
 from which (\ref{invertreceq}) follows easily.
\end{rema}

In a previous paper \cite{SteUmb}, Barbero and Cerruti proved that the operators $L^{(y)}, I^{(x)}$ generate a group which is commutative. We want to generalize these results, analyzing the action of these operators on the set $\mathcal{W}(R)$ of all linear recurrent sequences over $R$. We will prove that the group generated  by $L^{(y)}$ and $ I^{(x)}$ acts on $\mathcal{W}(R)$ preserving it, as we will show in the next section.
\section{A commutative group action on the set of linear recurrent sequences} 
First of all, we focus our attention on the operator $L^{(y)}$, with the aim to know how it acts on the characteristic polynomial of a given sequence $a \in\mathcal{W}(R)$. In order to do so, we prove the 
\begin{lemma} \label{lempoly}
Let us consider a field $\mathbb F$ containing $R$ in which we choose an element $\alpha$ and a nonzero element $y$. For any polynomial $P(t)=\sum\limits_{j = 0}^r {x_j t^j }$ over $\mathbb F$ we have
$$ \sum_{i=0}^m\binom{m}{i}y^{i}\alpha^{m-i}P(i)=(\alpha+y)^mQ(m), $$
where $Q(m)=\sum\limits_{j = 0}^r {y_j m^j }$ and $y_j=y_j(x_0,\cdots,x_r,y,\alpha)\in \mathbb F$.
\end{lemma}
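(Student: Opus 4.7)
The plan is to exploit linearity of the map $P \mapsto \sum_{i=0}^m \binom{m}{i} y^i \alpha^{m-i} P(i)$ in the coefficients of $P$, and so reduce to the monomial case $P(t) = t^j$. For each such monomial I will show that the sum equals $(\alpha+y)^m$ times a polynomial in $m$ of degree at most $j$; then summing over $j$ from $0$ to $r$ with weights $x_j$ will yield the desired $Q$ of degree at most $r$, and reading off coefficients will give the $y_j$.

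The computational heart is the falling-factorial identity
\begin{equation*}
\sum_{i=0}^{m}\binom{m}{i}\, y^i\, \alpha^{m-i}\, i^{\underline{k}} \;=\; m^{\underline{k}}\, y^k\, (\alpha+y)^{m-k},
\end{equation*}
where $i^{\underline{k}}=i(i-1)\cdots(i-k+1)$. This follows from the elementary identity $\binom{m}{i}\, i^{\underline{k}} = m^{\underline{k}}\binom{m-k}{i-k}$ followed by the binomial theorem applied to $(\alpha+y)^{m-k}$. Using the Stirling-second-kind expansion $i^j = \sum_{k=0}^j S(j,k)\, i^{\underline{k}}$, I obtain
\begin{equation*}
\sum_{i=0}^{m}\binom{m}{i}\, y^i\, \alpha^{m-i}\, i^j \;=\; \sum_{k=0}^{j} S(j,k)\, m^{\underline{k}}\, y^k\, (\alpha+y)^{m-k} \;=\; (\alpha+y)^{m}\sum_{k=0}^{j} S(j,k)\, y^k\, (\alpha+y)^{-k}\, m^{\underline{k}}.
\end{equation*}
The bracketed sum is a polynomial in $m$ of degree at most $j$ (since $m^{\underline{k}}$ has degree $k\le j$). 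Multiplying by $x_j$, summing over $j=0,\ldots,r$, and finally re-expressing the falling factorials $m^{\underline{k}}$ in the monomial basis $\{1,m,\ldots,m^r\}$ produces the required expression $(\alpha+y)^{m}Q(m)$ with $Q(m)=\sum_{j=0}^{r}y_j m^j$ and $y_j\in\mathbb F$.

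The one genuinely delicate point is the division by $\alpha+y$ when factoring $(\alpha+y)^m$ out of the Stirling sum: the argument requires $\alpha+y\neq 0$. This hypothesis is also necessary for the truth of the lemma, since with $P(t)=t$ and $m=1$ one has LHS $=y\neq 0$, whereas $(\alpha+y)^m Q(m)=0$ whenever $\alpha+y=0$. I will therefore treat $\alpha+y\neq 0$ as implicit. Apart from this, no real obstacle remains; the rest is bookkeeping, namely converting between the falling-factorial and monomial bases to extract the explicit form of the coefficients $y_j=y_j(x_0,\ldots,x_r,y,\alpha)$.
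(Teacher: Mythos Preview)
Your argument is correct. The overall shape---reduce by linearity to the monomials $P(t)=t^j$, then show each monomial sum equals $(\alpha+y)^m$ times a polynomial in $m$ of degree $\le j$---matches the paper. The difference is in how the monomial step is handled: the paper proves $\sum_i \binom{m}{i}y^i\alpha^{m-i}i^s = c_s(m,\alpha,y)(\alpha+y)^m$ by induction on $s$, repeatedly differentiating the binomial identity with respect to $y$ and observing that each differentiation raises the $m$-degree of $c_s$ by one. Your route via the falling-factorial identity $\binom{m}{i}i^{\underline{k}}=m^{\underline{k}}\binom{m-k}{i-k}$ and the Stirling expansion $i^j=\sum_k S(j,k)\,i^{\underline{k}}$ is more direct and immediately delivers an explicit formula for the polynomial factor; in fact this is essentially what the paper does in its follow-up Proposition, where it computes $c_s$ explicitly using finite differences and Stirling numbers. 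So you have effectively merged the paper's Lemma and its subsequent Proposition into a single argument. Your remark that $\alpha+y\neq 0$ is implicitly required is also apt: the paper's differentiation proof divides by $\alpha+y$ at each step as well.
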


\begin{proof}
We use the classical Newton formula
$$ \sum_{i=0}^m\binom{m}{i}y^i\alpha^{m-i}=(\alpha+y)^m.$$
Differentiating with respect to $y$, one gets
$$ \cfrac{1}{y}\sum_{i=0}^m\binom{m}{i}y^i\alpha^{m-i}i=m(\alpha+y)^{m-1}, $$
so
$$ \sum_{i=0}^m\binom{m}{i}y^i\alpha^{m-i}i=ym(\alpha+y)^{m-1}=\cfrac{y}{\alpha+y}m(\alpha+y)^m\quad. $$
We can prove by induction that
\begin{equation} \label{pseudo-binomial} \sum_{i=0}^m\binom{m}{i}y^i\alpha^{m-i}i^s=c_s(m,\alpha,y)(\alpha+y)^m\quad, \end{equation}
for every integer $s\geq1$, where $c_s(m,\alpha,y)$ is a coefficient depending on $m,\alpha,y$ and it can be viewed as a polynomial of degree $s$ in the variable $m$. The basis of the induction has already been proved. So let us suppose that the above formula is true for every integer between $1$ and $s$, and we prove it for $s+1$. Differentiating (\ref{pseudo-binomial}) with respect to $y$, one gets
$$ \cfrac{1}{y}\sum_{i=0}^m\binom{m}{i}y^i\alpha^{m-i}i^{s+1}=c_s'(m,\alpha,y)(\alpha+y)^m+c_s(m,\alpha,y)m(\alpha+y)^{m-1}, $$

$$  \sum_{i=0}^m\binom{m}{i}y^i\alpha^{m-i}i^{s+1}=\left(yc_s'(m,\alpha,y)+\cfrac{yc_s(m,\alpha,y)m}{\alpha+y}\right)(\alpha+y)^m=c_{s+1}(m,\alpha,y)(\alpha+y)^m , $$
where $c_s'(m,\alpha,y)$ is the derivative of $c_s(m,\alpha,y)$ with respect to $y$ and $c_{s+1}(m,\alpha,y)$ is a polynomial of degree $s+1$ in $m$, by the induction hypothesis. Therefore we can complete the proof of the lemma
$$ \sum_{i=0}^m\binom{m}{i}y^{i}P(i)\alpha^{m-i}=
\sum\limits_{j = 0}^r {x_j \sum\limits_{i = 0}^m{ \binom{m}{i} y^i \alpha ^{m - i} i^j }} = (\alpha  + y)^m \sum\limits_{j = 0}^r {x_j c_j (m,\alpha ,y)}=Q(m)(\alpha+y)^m. $$

\end{proof}
In the next proposition we show how to write $c_s(m,\alpha,y)$ of (\ref{pseudo-binomial}), for every natural number $s$ and $m$, in order to obtain $Q(m)$.

\begin{proposition}\label{propcy}
The polynomial $c_s(m,\alpha,y)$  has the following expression 
$$ 
c_s (m,\alpha ,y) = \sum\limits_{h = 0}^s {\left( {\sum\limits_{k = h}^s {\left\{ {\begin{array}{*{20}c}
   s  \\
   k  \\
\end{array}} \right\}\left[ {\begin{array}{*{20}c}
   k  \\
   h  \\
\end{array}} \right]( - 1)^{k - h} \left( {\frac{y}{{\alpha  + y}}} \right)^k } } \right)} m^h \quad.
$$
Here  ${\left\{ {\begin{array}{*{20}c}
   s  \\
   k  \\
\end{array}} \right\}}$ and  ${\left[ {\begin{array}{*{20}c}
   k  \\
   h  \\
\end{array}} \right]}$ are the Stirling numbers of second and first kind respectively, as defined in Graham, Knuth and Patashnik \cite{Concrete}.

\end{proposition}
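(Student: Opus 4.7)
The plan is to bypass the differentiation-induction scheme used for Lemma \ref{lempoly} and attack the sum
$$S_s(m,\alpha,y)=\sum_{i=0}^m\binom{m}{i}y^i\alpha^{m-i}i^s$$
directly, using the two standard bridges between the monomial basis and the falling-factorial basis. First I would replace the ordinary power $i^s$ by its expansion into falling factorials through Stirling numbers of the second kind,
$$i^s=\sum_{k=0}^{s}\left\{{s\atop k}\right\}i^{\underline{k}},$$
so that the computation reduces to evaluating $\sum_i\binom{m}{i}y^i\alpha^{m-i}i^{\underline{k}}$ for each $k$.

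The key computation is the closed form
$$\sum_{i=0}^{m}\binom{m}{i}y^i\alpha^{m-i}i^{\underline{k}}=m^{\underline{k}}\,y^k(\alpha+y)^{m-k},$$
which I would establish by pulling the falling factorial into the binomial coefficient via $\binom{m}{i}i^{\underline{k}}=m^{\underline{k}}\binom{m-k}{i-k}$ and then applying the ordinary binomial theorem to the shifted sum. Factoring out $(\alpha+y)^m$ gives
$$S_s(m,\alpha,y)=(\alpha+y)^m\sum_{k=0}^{s}\left\{{s\atop k}\right\}\Bigl(\frac{y}{\alpha+y}\Bigr)^{k}m^{\underline{k}},$$
so by uniqueness of the polynomial $c_s(m,\alpha,y)$ appearing in \eqref{pseudo-binomial} I can read off
$$c_s(m,\alpha,y)=\sum_{k=0}^{s}\left\{{s\atop k}\right\}\Bigl(\frac{y}{\alpha+y}\Bigr)^{k}m^{\underline{k}}.$$

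To reach the statement I then use the complementary identity expressing falling factorials back in the monomial basis via signed Stirling numbers of the first kind,
$$m^{\underline{k}}=\sum_{h=0}^{k}\left[{k\atop h}\right](-1)^{k-h}m^{h},$$
and exchange the order of summation. The coefficient of $m^h$ groups exactly into the inner sum advertised in the proposition, and the outer index runs from $h=0$ to $s$ because $\left\{{s\atop k}\right\}=0$ for $k>s$.

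I do not anticipate a serious obstacle: the only nonroutine step is the falling-factorial evaluation, and once that is in place the remainder is just a basis-change via Stirling numbers. The main thing to keep straight is the sign convention (the unsigned Stirling numbers of the first kind used by Graham--Knuth--Patashnik produce the $(-1)^{k-h}$ factor when one inverts the second-kind expansion), which is precisely what appears in the claimed formula.
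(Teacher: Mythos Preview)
Your proposal is correct and follows essentially the same route as the paper. The paper also expands $i^s$ in terms of $k!\binom{i}{k}=i^{\underline{k}}$ via Stirling numbers of the second kind (deriving this through finite differences rather than citing it), evaluates the resulting binomial sum to $\binom{m}{k}y^k(\alpha+y)^{m-k}=m^{\underline{k}}y^k(\alpha+y)^{m-k}/k!$, and then converts $m^{\underline{k}}$ back to monomials with the signed Stirling numbers of the first kind; your ``key computation'' and the paper's identity $\sum_i\binom{m}{i}\binom{i}{k}\alpha^{m-i}y^i=\binom{m}{k}y^k(\alpha+y)^{m-k}$ are the same statement up to a factor of $k!$.
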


\begin{proof}
We use the finite differences of a function $f$, defined as follows:
$$ \Delta^nf(t)=\sum_{k=0}^n\binom{n}{k}(-1)^{n-k}f(t+k), $$
which enable us to express $f(t)$ in this way \cite{Concrete}
$$ f(t)=\Delta^df(0)\binom{t}{d}+\Delta^{d-1}f(0)\binom{t}{d-1}+\cdots+\Delta f(0)\binom{t}{1}+f(0)\binom{t}{0}\quad. $$
In the special case $f(i)=i^s$, we obtain
\small
\begin{equation} \label{deltaf0} i^s=\sum_{k=0}^s\Delta^kf(0)\binom{i}{k}=\sum_{k=0}^s\left(\sum_{j=0}^k\binom{k}{j}(-1)^{k-j}j^s\right)\binom{i}{k}=\sum_{k=0}^s
k!\left\{ {\begin{array}{*{20}c}
   s  \\
   k  \\
\end{array}} \right\}
\binom{i}{k}, \end{equation}
\normalsize
where we use the relation \cite{Concrete}
$$
\sum\limits_{j = 0}^k {\left( {\begin{array}{*{20}c}
   k  \\
   j  \\
\end{array}} \right)} ( - 1)^{k - j} j^s  = k!\left\{ {\begin{array}{*{20}c}
   s  \\
   k  \\
\end{array}} \right\}.
$$
Now it is clear that
\begin{equation} \label{pseudo-binomial2} \sum_{i=0}^m\binom{m}{i}\alpha^{m-i}y^ii^s=\sum_{k=0}^s\Delta^kf(0)\sum_{i=0}^m\binom{m}{i}\binom{i}{k}\alpha^{m-i}y^i. \end{equation}
Considering the second sum at the right side, we have
$$ \sum_{i=0}^m\binom{m}{i}\binom{i}{k}\alpha^{m-i}y^i=\binom{m}{k}\sum_{i=k}^m\binom{m-k}{i-k}\alpha^{m-i}y^i=\binom{m}{k}y^k(\alpha+y)^{m-k}.$$
If we put this relation into (\ref{pseudo-binomial2}), using (\ref{deltaf0}), we obtain 
$$ \sum_{i=0}^m\binom{m}{i}\alpha^{m-i}y^ii^s=
\sum\limits_{k = 0}^s {k!\left\{ {\begin{array}{*{20}c}
   s  \\
   k  \\
\end{array}} \right\}\binom{m}{k}} \left( {\frac{y}{{\alpha  + y}}} \right)^k (\alpha  + y)^m  = \sum\limits_{k = 0}^s {\left\{ {\begin{array}{*{20}c}
   s  \\
   k  \\
\end{array}} \right\}} \left( {\frac{y}{{\alpha  + y}}} \right)^k m^{\underline k } (\alpha  + y)^m,$$
where $m^{\underline k }$ denotes the falling factorial $k$th-power of $m$ \cite{Concrete}. We can write
\begin{equation}\label{falling}
m^{\underline k }  = \sum\limits_{h = 0}^k {\left[ {\begin{array}{*{20}c}
   k  \\
   h  \\
\end{array}} \right]} ( - 1)^{k - h} m^h .
\end{equation}
Therefore the proof is complete, since we can use (\ref{falling}) in the previous equality.
\end{proof}

\begin{corollary}\label{coro}

With the hypotheses of Lemma \ref{lempoly} and Proposition \ref{propcy}

$$ Q(m)=\sum_{i=0}^r \sum\limits_{h = 0}^i {{\sum\limits_{k = h}^i {x_i\left\{ {\begin{array}{*{20}c}
   i  \\
   k  \\
\end{array}} \right\}\left[ {\begin{array}{*{20}c}
   k  \\
   h  \\
\end{array}} \right]( - 1)^{k - h} \left( {\frac{y}{{\alpha  + y}}} \right)^k } } } m^h. 
$$

\end{corollary}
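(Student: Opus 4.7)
The plan is to derive the corollary as a direct substitution. From the final line of the proof of Lemma \ref{lempoly} I can read off that
$$
Q(m)=\sum_{j=0}^{r} x_j\, c_j(m,\alpha,y),
$$
so the polynomial $Q(m)$ is already expressed in terms of the coefficients $c_j(m,\alpha,y)$. First I would explicitly state this identification, since it is not highlighted as a separate equation in the lemma but is the natural bridge to Proposition \ref{propcy}.

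Next I would substitute the closed form for $c_j(m,\alpha,y)$ supplied by Proposition \ref{propcy}, namely
$$
c_j(m,\alpha,y)=\sum_{h=0}^{j}\Biggl(\sum_{k=h}^{j}\left\{\begin{array}{c} j \\ k \end{array}\right\}\left[\begin{array}{c} k \\ h \end{array}\right](-1)^{k-h}\left(\frac{y}{\alpha+y}\right)^{k}\Biggr)m^{h},
$$
into the expression for $Q(m)$, and then pull the summation over $j$ (relabelled as $i$ in the statement) outside. Since the bounds of the inner sums depend only on $j$ (not on $x_j$), the factor $x_i$ can be moved inside the triple sum, yielding precisely the formula claimed in the statement.

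No real obstacle is expected: the argument is a routine substitution and a triple rearrangement of summation indices, with the only care needed being to keep the range $h\le k\le i$ consistent and to move the coefficient $x_i$ past the inner sums. I would end by remarking that the coefficient of $m^h$ in $Q(m)$ can thus be read off as
$$
y_h=\sum_{i=h}^{r}\sum_{k=h}^{i} x_i \left\{\begin{array}{c} i \\ k \end{array}\right\}\left[\begin{array}{c} k \\ h \end{array}\right](-1)^{k-h}\left(\frac{y}{\alpha+y}\right)^{k},
$$
which makes explicit the dependence $y_h=y_h(x_0,\dots,x_r,y,\alpha)$ announced in Lemma \ref{lempoly}.
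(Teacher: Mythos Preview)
Your proposal is correct and matches the paper's intent: the paper gives no explicit proof of this corollary, treating it as an immediate consequence of combining the identity $Q(m)=\sum_{j=0}^r x_j\,c_j(m,\alpha,y)$ from the end of Lemma~\ref{lempoly} with the closed form for $c_j$ from Proposition~\ref{propcy}. Your substitution and reindexing are exactly the intended argument, and your final remark extracting $y_h$ is a nice bonus that makes the dependence announced in Lemma~\ref{lempoly} explicit.
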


Now we are ready to point out the effects of $L^{(y)}$ on characteristic polynomials, as we claim in the following

\begin{theorem} \label{binomial-theorem}
Let $a\in \mathcal{W}(R)$ be a linear recurrent sequence of degree $r$, with characteristic polynomial over $R$ given by $f(t)=t^r-\sum_{i=1}^r h_it^{r-i}$ and zeros $\alpha_1,\ldots,\alpha_r$, over a field $\mathbb{F}$ containing $R$. Then $c=L^{(y)}(a)$ is a linear recurrent sequence of degree $r$, with characteristic polynomial $L^{(y)}(f(t))=f(t-y)$ and zeros $y+\alpha_1,\ldots,y+\alpha_r$, over the same field $\mathbb{F}$. 
\end{theorem}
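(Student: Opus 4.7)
My plan is to reduce the theorem to the Lemma \ref{lempoly} via the Binet-type representation of linear recurrent sequences. Over the splitting field $\mathbb{F}$, any $a \in \mathcal{W}(R)$ with characteristic polynomial $f(t)$ admits a closed form
$$a_n = \sum_{j} P_j(n)\,\alpha_j^n,$$
where the sum runs over the distinct zeros $\alpha_j$ of $f$ and each $P_j$ is a polynomial in $n$ of degree strictly less than the multiplicity $m_j$ of $\alpha_j$. This is the standard general solution of a homogeneous linear recurrence, which I would cite rather than reprove.

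Substituting into Definition \ref{binomial} and interchanging the two sums gives
$$c_n = \sum_{i=0}^{n}\binom{n}{i} y^{n-i} a_i = \sum_{j} \sum_{i=0}^{n}\binom{n}{i}\alpha_j^{\,i}\, y^{n-i}\, P_j(i).$$
Now I apply Lemma \ref{lempoly} to each inner sum, with the roles of its parameters specialized as $\alpha \mapsto y$, $y \mapsto \alpha_j$, $P \mapsto P_j$, $m \mapsto n$. The lemma yields a polynomial $Q_j$ with $\deg Q_j \leq \deg P_j$ such that the inner sum equals $Q_j(n)(y+\alpha_j)^n$. Hence
$$c_n = \sum_{j} Q_j(n)\,(y+\alpha_j)^n.$$

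This is precisely the general-solution form of a linear recurrent sequence whose characteristic zeros are $y+\alpha_1, \ldots, y+\alpha_r$ with the same multiplicities $m_j$, because $\deg Q_j \leq \deg P_j < m_j$. The monic polynomial with exactly these roots and multiplicities is
$$\prod_{j}\bigl(t-(y+\alpha_j)\bigr)^{m_j} = \prod_{j}\bigl((t-y)-\alpha_j\bigr)^{m_j} = f(t-y),$$
giving the claimed formula $L^{(y)}(f(t)) = f(t-y)$ and completing the identification of zeros. Note that although we worked over $\mathbb{F}$ to use the Binet formula, the resulting polynomial $f(t-y)$ has coefficients in $R$, so the new recurrence is defined over $R$ as required.

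The only subtle point is the degree matching: one must verify that $c$ truly satisfies a recurrence of degree $r$ and not something smaller that would still be consistent with the data. This is automatic from the displayed expression for $c_n$, since the sequences $n \mapsto n^k (y+\alpha_j)^n$ for $0 \leq k < m_j$ are exactly the kernel basis of the operator $f(E-y)$ (where $E$ is the shift), and so $c$ lies in this kernel; conversely, if $f$ is the true minimal polynomial of $a$, then $f(t-y)$ is the minimal polynomial of $c$, since the passage $a \mapsto c$ is invertible by $L^{(-y)}$ (from the group property proved in \cite{SteUmb}). I expect no computational obstacle beyond bookkeeping of the parameter substitutions when invoking Lemma \ref{lempoly}.
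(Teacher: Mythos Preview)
Your argument is correct and follows essentially the same route as the paper: invoke the Binet-type representation over $\mathbb{F}$ and apply Lemma~\ref{lempoly} to each root's contribution to obtain $c_n=\sum_j Q_j(n)(y+\alpha_j)^n$. The only cosmetic differences are that the paper treats the simple-root and repeated-root cases separately rather than uniformly, and that your final paragraph on minimality (via the invertibility $L^{(-y)}\circ L^{(y)}=\mathrm{id}$) goes slightly beyond what the paper records.
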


\begin{proof}

If $f(t)$ has distinct zeros $\alpha_1,\ldots,\alpha_r$, by Binet formula
$$ a_n=\sum_{i=0}^{r}A_i\alpha_{i}^{n} , $$
for some $A_i \in \mathbb F$ derived from initial conditions. For all terms of $c$, we have 
$$ c_n=\sum_{i=0}^{n}\binom{n}{i}y^{n-i}a_i=\sum_{i=0}^{n}\binom{n}{i}y^{n-i}\sum_{j=0}^{r}A_j\alpha_{j}^{i}=\sum_{j=0}^{r}A_j(\alpha_j+y)^n\quad.$$
If $f(t)$ has a multiple zero $\alpha_r$ of multiplicity $s$, the Binet formula becomes
$$  a_n=\sum_{i=0}^{r-s}A_i\alpha_i^n+A_r(n)\alpha_r^n , $$
where $A_i \in \mathbb F$, for $i=1,\ldots,r-s$, and $A_r(n)$ is a polynomial of degree $s-1$ over $\mathbb F$.
In this case
$$ c_n=\sum\limits_{j = 1}^{r - s} {A_j } (\alpha _j  + y)^n +\sum_{i=0}^{n}\binom{n}{i}y^{i}A_r(n-i)\alpha_r^{n-i}, $$ 
and by Lemma \ref{lempoly}
$$ c_n=\sum\limits_{j = 1}^{r - s} {A_j } (\alpha _j  + y)^n +Q(n)(\alpha_r+y)^n, $$
where $Q(n)$ is a polynomial of degree $s-1$ in $n$, as shown in Proposition \ref{propcy} and Corollary \ref{coro}. In order to complete the proof, we have to observe that this method can be applied, even though there would be more than one multiple zero. 

\end{proof}

In the next corollary we write explicitly the characteristic polynomial $L^{(y)}(f(t))$ .

\begin{corollary} \label{Lf-theorem}

Let $f(t)=\sum_{k=0}^{r} h_k t^{r-k}$ be a polynomial over R, with $h_0=1$. Then
$$ L^{(y)}(f(t))=\sum_{k=0}^{r}p_k(y,h_1,\ldots,h_k)t^{r-k}, $$
where
$$ p_k(y,h_1,\ldots,h_k)=\sum_{i=0}^k\binom{r-i}{k-i}h_i(-y)^{k-i}. $$
\end{corollary}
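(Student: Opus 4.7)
The plan is to reduce the statement to a straightforward coefficient extraction, using Theorem \ref{binomial-theorem} as a black box. That theorem gives the identity
$$ L^{(y)}(f(t)) = f(t-y), $$
so everything boils down to expanding $f(t-y)$ and reading off the coefficient of $t^{r-k}$.

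Concretely, I would start from $f(t-y) = \sum_{k=0}^{r} h_k (t-y)^{r-k}$ and apply the ordinary binomial theorem to each factor $(t-y)^{r-k}$, writing
$$ (t-y)^{r-k} = \sum_{j=0}^{r-k} \binom{r-k}{j} (-y)^{j} t^{r-k-j}. $$
Then I would substitute this into the sum for $f(t-y)$ and reindex using $m = k+j$, so that the exponent of $t$ becomes $r-m$. After swapping the order of summation, the coefficient of $t^{r-m}$ collects exactly the terms indexed by $k=0,1,\ldots,m$, giving
$$ [t^{r-m}]\, f(t-y) = \sum_{k=0}^{m} \binom{r-k}{m-k} h_k (-y)^{m-k}, $$
which matches the claimed formula for $p_m(y,h_1,\ldots,h_m)$ after renaming $k \to i$ and $m \to k$.

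There is no real obstacle here; the only care needed is in the reindexing step, to make sure the upper limit of the inner sum indeed becomes $m$ (it does, because $\binom{r-k}{m-k}$ vanishes for $k>m$, so extending or truncating the range is harmless). One should also note that the assertion $p_k$ depends only on $h_1,\ldots,h_k$ (and not on higher $h_j$) is transparent from the reindexed formula, since the summation index $i$ runs only up to $k$. Finally, it is worth remarking that $h_0=1$ plays no role in the derivation beyond fixing the leading coefficient, so one obtains $p_0(y)=1$ as expected, confirming that $L^{(y)}(f(t))$ is again monic of degree $r$, in agreement with Theorem \ref{binomial-theorem}.
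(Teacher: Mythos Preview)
Your argument is correct and matches the paper's own proof essentially line for line: the paper also invokes Theorem~\ref{binomial-theorem} to write $L^{(y)}(f(t))=f(t-y)=\sum_{i=0}^{r}h_i(t-y)^{r-i}$, expands each $(t-y)^{r-i}$ by the binomial theorem, and swaps the order of summation to read off the coefficient of $t^{r-k}$. The only difference is notational (your intermediate index $j$ and reindexing $m=k+j$ versus the paper's direct choice of the inner summation index), so there is nothing to add.
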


\begin{proof}
Using the results of Theorem \ref{binomial-theorem}, we have
$$ L^{(y)}(f(t))=f(t-y)=\sum_{i=0}^{r}h_i(t-y)^{r-i}.$$
Developing $(t-y)^{r-i}$ and rearranging sums, we obtain
$$L^{(y)}(f(t))=\sum_{i=0}^{r}h_i\sum_{k=i}^{r}\binom{r-i}{k-i}(-y)^{k-i}t^{r-k}=\sum_{k=0}^{r}\left(\sum_{i=0}^k\binom{r-i}{k-i}h_i(-y)^{k-i}\right)t^{r-k}\quad.$$

\end{proof}
\begin{rema}\label{fignum} 
The polynomials $p_k(y,h_1,\ldots,h_k)$ have coefficients related to triangular numbers, tetrahedral numbers and their generalizations. In order to make clear this relation, we introduce the sequences
$(T^{(k)}_h)_{h=0}^{+\infty}$ of \emph{figurate numbers}. Starting from $(T^{(1)}_h)_{h=0}^{+\infty}=\{0,1,1,1,1,1,\ldots\}$, each term of  $(T^{(k)}_h)_{h=0}^{+\infty}$ corresponds to a  partial sum of $(T_h^{(k-1)})_{h=0}^{+\infty}$, for $k=1,2,3,...$ . 

For example,
$(T^{(2)}_h)_{h=0}^{+\infty}=\{0,1,2,3,4,5,\ldots\}$, $(T^{(3)}_h)_{h=0}^{+\infty}=\{0,1,3,6,10,15,\ldots\}$ (triangular numbers \seqnum{A000217}). The $h$th term of $(T^{(k)}_h)$ is \cite{Dickson}
$$ T^{(k)}_h=\binom{h+k-2}{k-1}\quad. $$

In particular,
$$ T^{(r-k+1)}_{k-i+1}=\binom{r-k+1+k-i+1-2}{k-i}=\binom{r-i}{k-i},$$
therefore the polynomials $p_k(y,h_1,\ldots,h_k)$ are connected to figurate numbers
$$ p_k(y,h_1,\ldots,h_k)=\sum_{i=0}^k T^{(r-k+1)}_{k-i+1}h_i(-y)^{k-i}\quad.$$
\end{rema}
Now we study the action of $I^{(x)}$ on $\mathcal{W}(R)$. We recall that any linear recurrent sequence $a \in \mathcal{W}(R)$, of degree $r$, with characteristic polynomial $f(t)=t^r-\sum_{i=1}^{r}h_it^{r-i}$, has a rational generating function \cite{Concrete} $$A(t)=\cfrac{u(t)}{f^R(t)}\quad.$$
Here  $$f^R(t)=1-\sum_{i=1}^{r}h_{i}t^{i}$$  denotes the reflected polynomial of $f(t)$. Moreover $u(t)=\sum_{i=0}^{r-1}u_{i}t^{i}$ is a polynomial over $R$, 
wwhose coefficients $u_{i}$ can be derived from initial conditions $$(a_0,a_1,\ldots,a_{r-1})=(s_0,s_1,\ldots,s_{r-1}).$$

\begin{theorem} \label{invert-theorem}
Let $a\in \mathcal{W}(R)$ be a linear recurrent sequence of degree $r$, with characteristic polynomial over $R$ given by $f(t)=t^r-\sum_{i=1}^{r}h_it^{r-i}$, and generating function $A(t)=\cfrac{u(t)}{f^R(t)}\quad.$ Then $b=I^{(x)}(a)$ is a linear recurrent sequence, with characteristic polynomial $$(f^R(t)-xtu(t))^R,$$ whose coefficients are
\begin{equation}\label{Ifc}
h_1+xs_0, \quad h_{i+1}  + xs_i  - x\sum\limits_{j = 1}^{i} {h_j } s_{i - j} \quad \text{for} \ i=1,\ldots,r-1\quad.
\end{equation}

\end{theorem}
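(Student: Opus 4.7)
The plan is to work entirely at the level of generating functions. The starting point is Definition~\ref{invert}: substituting $A(t)=u(t)/f^R(t)$ into \eqref{invertgen} and simplifying yields
\[
B(t)=\frac{u(t)}{f^R(t)-xt\,u(t)}.
\]
Because $f^R(t)$ has constant term $1$, so does $f^R(t)-xt\,u(t)$; the latter has degree at most $r$, while $u(t)$ has degree at most $r-1$. The standard identification of rational generating functions then forces $b$ to be a linear recurrent sequence whose reflected characteristic polynomial is this denominator, and reflecting produces $(f^R(t)-xt\,u(t))^R$, settling the first half of the statement.

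Next I would expand the denominator coefficient-by-coefficient. Writing $f^R(t)=1-\sum_{i=1}^r h_i t^i$ and $u(t)=\sum_{i=0}^{r-1}u_i t^i$, a one-line computation yields
\[
f^R(t)-xt\,u(t)=1-\sum_{i=1}^{r}(h_i+xu_{i-1})\,t^i,
\]
so that, upon reflecting, the coefficient of the recurrence for $b$ in position $i$ is $h_i+xu_{i-1}$ for $i=1,\ldots,r$.

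The final step is to translate each $u_{i-1}$ into initial data. From $u(t)=A(t)\,f^R(t)$, comparing coefficients of $t^j$ for $0\le j\le r-1$ gives $u_0=s_0$ and $u_j=s_j-\sum_{k=1}^{j}h_k s_{j-k}$ for $1\le j\le r-1$. Substituting these into $h_i+xu_{i-1}$ for $i=1$ and for $i=2,\ldots,r$ reproduces precisely the two expressions in \eqref{Ifc}. The only real difficulty is careful bookkeeping of the index shifts between $u$, $h$, and $s$; no new ideas are needed beyond the rational-function identity that opens the proof and the standard dictionary between generating functions and linear recurrences.
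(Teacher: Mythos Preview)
Your proposal is correct and follows essentially the same route as the paper: substitute $A(t)=u(t)/f^R(t)$ into the defining formula for $B(t)$, read off the new denominator $f^R(t)-xt\,u(t)$ as the reflected characteristic polynomial, expand it to obtain $h_i+xu_{i-1}$, and then recover the $u_j$ from the identity $u(t)=A(t)f^R(t)$ together with the initial conditions. The only cosmetic difference is that you state the degree and constant-term observations explicitly, while the paper leaves them implicit.
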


\begin{proof}

By Definition \ref{invert}, $b=I^{(x)}(a)$ has the rational generating function $$B(t)=\cfrac{A(t)}{1-xtA(t)}\quad.$$ Substituting the rational generating function of $a$ we obtain

$$ B(t)=\cfrac{A(t)}{1-xtA(t)}=\cfrac{\cfrac{u(t)}{f^R(t)}}{1-xt\cfrac{u(t)}{f^R(t)}}=\cfrac{u(t)}{f^R(t)-xtu(t)}=\cfrac{u(t)}{[(f^R(t)-xtu(t))^R]^R} \quad. $$
This relation immediately proves that $I^{(x)}(a)$ is a linear recurrent sequence having characteristic polynomial $(f^R(t)-xtu(t))^R$. If we explicitly write this polynomial, we find
\begin{equation} \label{IfR} (f^R(t)-xtu(t))^R=(-(h_r+xu_{r-1})t^r-(h_{r-1}+xu_{r-2})t^{r-1}-\cdots-(h_1+xu_0)t+1)^R, \end{equation}
and clearly
\begin{equation}\label{Ifp} 
(f^R(t)-xtu(t))^R=t^r-(h_1+xu_0)t^{r-1}-\cdots-(h_{r-1}+xu_{r-2})t-(h_r+xu_{r-1}). 
\end{equation}
Now, to complete the proof, we only need to observe that 
$$u_0=s_0,\quad u_i=s_i-\sum_{j=1}^{i}h_js_{i-j} \quad i=1,\ldots,r-1 .$$
In fact the terms $u_i$ can be easily found, by means of a simple comparison between coefficients of $t$ on the left and right side of the equality $$A(t)f^R(t)=u(t), $$ and using initial conditions $(a_0,a_1,\ldots,a_{r-1})=(s_0,s_1,\ldots,s_{r-1}).$
\end{proof}
\begin{corollary}
Any two polynomials $f(t)=t^r-\sum_{i=1}^{r}h_it^{r-i}$ and $g(t)=t^r-\sum_{i=1}^{r}H_it^{r-i}$ over $R$ can be viewed as  characteristic polynomials of two uniquely determined sequences $a\in\mathcal{W}(R)$ and $b=I^{(x)}(a)\in\mathcal{W}(R)$, respectively, where $x$ is an invertible element of $R$.
\end{corollary}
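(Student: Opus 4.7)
The plan is to read off the result directly from the formulas for the coefficients of the characteristic polynomial of $I^{(x)}(a)$ given by Theorem \ref{invert-theorem}. A linear recurrent sequence in $\mathcal{W}(R)$ is uniquely determined by its characteristic polynomial together with its initial conditions $(s_0, s_1, \ldots, s_{r-1})$. So, fixing $f(t)$ as the characteristic polynomial of $a$, the sequence $a$ is pinned down once we choose the $s_i$, and the task reduces to showing that there is exactly one choice of $(s_0, \ldots, s_{r-1})$ that makes the characteristic polynomial of $b=I^{(x)}(a)$ equal to the prescribed $g(t)$.

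Equating the coefficients from (\ref{Ifc}) with those of $g(t)$ gives the system
\begin{equation*}
h_1 + x s_0 = H_1, \qquad h_{i+1} + x s_i - x\sum_{j=1}^{i} h_j s_{i-j} = H_{i+1}, \quad i=1,\ldots,r-1.
\end{equation*}
First I would observe that this is a lower triangular linear system in the unknowns $s_0, s_1, \ldots, s_{r-1}$, whose diagonal entries are all equal to $x$. Since $x$ is invertible in $R$, I can solve recursively: put $s_0 = x^{-1}(H_1 - h_1)$, and, having determined $s_0,\ldots,s_{i-1}$, define
\begin{equation*}
s_i = x^{-1}(H_{i+1} - h_{i+1}) + \sum_{j=1}^{i} h_j s_{i-j}.
\end{equation*}
Each $s_i$ lies in $R$ because $x^{-1}\in R$ and all the other data do, so the resulting sequence $a$ genuinely belongs to $\mathcal{W}(R)$.

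Finally, I would close the argument by noting that uniqueness is automatic from the triangular form: any solution must satisfy precisely these recurrence-like equations, leaving no freedom. Applying Theorem \ref{invert-theorem} to this $a$ then yields $b = I^{(x)}(a) \in \mathcal{W}(R)$ with characteristic polynomial $g(t)$, and the pair $(a,b)$ so constructed is the unique one. There is really no obstacle here beyond recognizing the triangularity; the invertibility of $x$ is doing exactly the work of guaranteeing that each step of the back-substitution is well defined.
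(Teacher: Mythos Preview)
Your proof is correct and follows exactly the same approach as the paper: you set up the system of equations from (\ref{Ifc}) and solve it uniquely for the initial conditions $(s_0,\ldots,s_{r-1})$ using the invertibility of $x$. The paper's proof is terser (it simply asserts the system can be solved), while you make the triangular structure and the recursive solution explicit, but the underlying argument is the same.
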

\begin{proof}
As a consequence of (\ref{Ifc}) we can easily solve the system
$$h_1+xs_0=H_1,\quad  h_{i+1}  + xs_i  - x\sum\limits_{j = 1}^{i} {h_j } s_{i - j}=H_{i+1} \quad i=1,\ldots,r-1\quad,$$
in order to determine the unique values of initial conditions on $a$ such that $b=I^{(x)}(a)$.
\end{proof}

\begin{rema}\label{annihil}
The previous theorem shows how the action of $I^{(x)}$ on $\mathcal{W}(R)$ is related to the initial conditions on $a$.
In particular, for any $a\in\mathcal{W}(R)$, the characteristic polynomial of $b=I^{(x)}(a)$ can have degree less than the one of $a$. In other words, under a suitable choice of $x$, we can reduce the degree of polynomial (\ref{IfR}) changing his explicit form (\ref{Ifp}). In fact, when $u_{r-1}$ is an invertible element of $R$, we can choose $x=-h_r(u_{r-1})^{-1}$ in order to annihilate the coefficient of $t^r$ in (\ref{IfR}).
\end{rema}

\begin{rema}\label{comb}
The Invert operator also has a combinatorial interpretation. If $b=I(a)$, then $b_n$ is the number of ordered arrangements of postage stamps, of total value $n+1$, that can be formed if we have $a_i$ types of stamps of value $i+1$, when $i\geq 0$ \cite{BerSlo}. We want to point out a connection between Invert and complete ordinary Bell polynomials, as they are presented in Port \cite{Port}, arising from their combinatorial interpretations. So we recall that complete ordinary Bell polynomials are defined  by (Port \cite{Port})
$$ B_n(t)=\sum_{k=1}^nB_{n,k}(t), $$
where $t=(t_1,t_2,\ldots)$ and the partial ordinary Bell polynomials $B_{n,k}(t)$ satisfy
$$ \left(\sum_{n\geq1}t_nz^n\right)^k=\sum_{n \geq k}B_{n,k}(t)z^n, $$
and
$$ 
B_{n,k} \left( t \right) = \sum_{\substack {i_1  + 2i_2  +  \cdots  + ni_n = n \\ i_1  + i_2  +  \cdots  + i_n  = k}} \frac{{k!}}{{i_1 !i_2 ! \cdots i_n !}}t_1^{i_1 } t_2^{i_2 }  \cdots t_n^{i_n }\quad.
$$
The last relation provides a straightforward combinatorial interpretation for complete ordinary Bell polynomials.
In fact $B_n(t)$ corresponds to the sum of monomials $t_{i_1}\cdots t_{i_k}$ (written with repetition of $t_{i_j}$), whose coefficients are the number of way to write the word $t_{i_1}\cdots t_{i_k}$, when $\sum_{j=1}^{k}i_j=n$. 
For example,
$$ B_4(t)=t_4+2t_1t_3+t_2t_2+3t_1t_1t_2+t_1t_1t_1t_1=t_4+2t_1t_3+t_2^2+3t_1^2t_2+t_1^4. $$
Now, if we consider the sequence $b=I(a)$, $ a\in\mathcal{S}(R)$, we can put together the combinatorial aspects of Invert and complete ordinary Bell polynomials
$$ b_n=B_{n+1}(a).$$
\end{rema}
\section{Construction and Deconstruction of impulsequences}
In this section we explain how previous results can create an interesting set of tools, which will enable us to play with sequences $a\in\mathcal{W}(R)$, of order $r$, having the \emph{r--impulse} $(0,0,\ldots,0,1)$ as initial conditions. Let us call such sequences \emph{impulsequences}. 
Considering the impulsequence $a$, with characteristic polynomial $f(t)$ of degree $r$, from Theorem \ref{binomial-theorem} and Theorem \ref{invert-theorem}, respectively, we have
\begin{equation} \label{Lf} L^{(z)}(f(t))=f(t-z),\end{equation}
and
\begin{equation} \label{If} I^{(z)}(f(t))=f(t)-z. \end{equation}
Using these relations we can \emph{construct} every impulsequence, from the \emph{startsequence} $$u=(1,0,0,0,\ldots).$$ Viceversa it is possible to \emph{deconstruct} every impulse sequence into  $u$.

The sense of these claims will soon be clear. We also need the two operators $\sigma$ and $\rho$, introduced in Definition \ref{sigma-rho}. They act on a linear recurrent sequence changing its characteristic polynomial, dividing or multiplying it by $t$, respectively, when possible.
So let us start a \emph{L--construction} from $u$.
The startsequence $u=r^{(0)}$ has characteristic polynomial $f(t)=t$, with zero $\{0\}$. The operator $L^{(z_1)}$, applied to $r^{(0)}$, translates by $z_1$ the zero of $f(t)$. The characteristic polynomial becomes $f(t-z_1)=t-z_1$ and so
$$ v^{(1)}=L^{(z_1)}(r^{(0)})=(1,z_1,z_1^2,z_1^3,\ldots). $$
Using the operator $\rho$, we obtain
$$ \rho(v^{(1)})=(0,1,z_1,z_1^2,z_1^3,\ldots)=r^{(1)}. $$
The sequence $r^{(1)}$ has characteristic polynomial $t(t-z_1)=t^2-z_1t$ with zeros $\{0,z_1\}$. Acting on $r^{(1)}$ with another operator $L^{(z_2)}$, we obtain the sequence $v^{(2)}$ with characteristic polynomial $$(t-z_2)(t-(z_2+z_1))=t^2-(z_1+2z_2)t+z_2(z_1+z_2),$$ and zeros $\{z_2,z_2+z_1\}$, i.e., we obtain
$$v^{(2)}= L^{(z_2)}(r^{(1)})=(0,1,z_1+2z_2,\ldots). $$
For every $\alpha,\beta$ in a field $\mathbb F$ containing $R$, setting $z_2=\alpha$,  $z_1+z_2=\beta$, it is possible to \emph{L--construct} any impulsequence of order $2$, with the $2$--impulse $(0,1)$ as initial conditions. Repeating this process we can \emph{L--construct} any impulsequence of order $r$. 
The explained process is naturally invertible using the operator $\sigma$. So it is possible to \emph{L--deconstruct} every impulsequence into the startsequence. 
\begin{example}
The \emph{L--deconstruction} of the Fibonacci sequence \seqnum{A000045}. The characteristic polynomial of $F=(0,1,1,2,3,5,8,\ldots)$ is  $t^2-t-1$, with zeros $$\left\{\cfrac{1+\sqrt{5}}{2},\cfrac{1-\sqrt{5}}{2}\right\}.$$ Appliying $L^{\left(-\frac{1+\sqrt{5}}{2}\right)}$ we obtain
$$ L^{\left(-\frac{1+\sqrt{5}}{2}\right)}(F)=(0,1,-\sqrt{5},5,\ldots)=r^{(1)}, $$
with characteristic polynomial whose zeros are translated by $-\frac{1+\sqrt{5}}{2}$, i.e., $\{0,-\sqrt{5}\}$. Using the operator $\sigma$ we eliminate $0$ from the last sequence and from the set of zeros of its characteristic polynomial. The new sequence is
$$ \sigma(r^{(1)})=(1,-\sqrt{5},5,\ldots) $$
and the characteristic polynomial has the only zero $\{-\sqrt{5}\}$. In order to eliminate $\{-\sqrt{5}\}$ and to obtain the startsequence, we have to use the operator $L^{\left(\sqrt{5}\right)}$, as folows:
$$ L^{\left(\sqrt{5}\right)}(\sigma (r^{(1)}))=u. $$
Inverting this process we can \emph{L--construct} the Fibonacci sequence. Our purpose is to transform the characteristic polynomial $t$ of $u=r^{(0)}=(1,0,0,0,\ldots)$ into the polynomial $t^2-t-1$.
$$ L^{\left(-\sqrt{5}\right)}(r^{(0)})=(1,-\sqrt{5},5,\ldots) $$
has characteristic polynomial $t+\sqrt{5}$ with zero $\{-\sqrt{5}\}$ and
$$ \rho \left(L^{\left(-\sqrt{5}\right)}(r^{(0)})\right)=(0,1,-\sqrt{5},5,\ldots)=r^{(1)}, $$ 
has characteristic polynomial $t(t+\sqrt{5})$ with zeros $\{0,-\sqrt{5}\}$. In order to obtain the polynomial $t^2-t-1$, with zeros $\left\{\cfrac{1+\sqrt{5}}{2},\cfrac{1-\sqrt{5}}{2}\right\}$, finally we have to use $L^{\left(\frac{1+\sqrt{5}}{2}\right)}$ and we obtain
$$ L^{\left(\frac{1+\sqrt{5}}{2}\right)}(r^{(1)})=F. $$
\end{example}
It is possible to find an explicit expression of $v^{(k)}=L^{(z_k)}(r^{(k-1)})$ as shown in the following
\begin{proposition}
After $k$ steps of L-construction with parameters $z_1, z_2,\ldots,z_k \in \mathbb F$, $\mathbb F$ a field, we
obtain the recurrent sequence $v^{(k)}=L^{(z_k)}(r^{(k-1)})$ with characteristic polynomial 
\begin{equation}\label{charpoly}
(t - z_k )(t - z_k  - z_{k-1} ) \cdots (t - z_k  - z_{k-1}  -  \cdots  - z_1 )
\end{equation} 
and we have 
\begin{equation}\label{vkn}
(v^{(k)})_n=\sum\limits_{h_{k - 1}  = k - 1}^n {\sum\limits_{h_{k - 2}  = k - 2}^{h_{k - 1}  - 1}{ \cdots \sum\limits_{h_1  = 1}^{h_2  - 1}\binom{n}{h_{k - 1}}\binom{h_{k - 1}  - 1}{h_{k - 2} } \cdots \binom{h_2  - 1}{h_1 }  z_k^{n - h_{k - 1} } z_{k - 1}^{h_{k - 1}  - h_{k - 2}  - 1}  \cdots z_1^{h_1  - 1}}} 
\end{equation}
\end{proposition}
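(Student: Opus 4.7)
The plan is to prove both assertions by induction on $k$, exploiting the recursive definition $v^{(k)} = L^{(z_k)}(r^{(k-1)})$ with $r^{(k-1)} = \rho(v^{(k-1)})$.

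For the characteristic polynomial~\eqref{charpoly}, I would argue inductively. The startsequence $r^{(0)}=u$ has characteristic polynomial $t$, matching the claimed form for $k=0$ (empty product convention aside, equivalently the case $k=1$ can serve as base, where $L^{(z_1)}$ applied to a sequence with zero $\{0\}$ produces by Theorem~\ref{binomial-theorem} a sequence with zero $\{z_1\}$, hence polynomial $t-z_1$). For the inductive step, assume $v^{(k-1)}$ has characteristic polynomial $(t-z_{k-1})(t-z_{k-1}-z_{k-2})\cdots(t-z_{k-1}-\cdots-z_1)$. Applying $\rho$ prepends a $0$ and multiplies the polynomial by $t$, so $r^{(k-1)}$ has zeros $\{0,\,z_{k-1},\,z_{k-1}+z_{k-2},\,\ldots,\,z_{k-1}+\cdots+z_1\}$. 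By Theorem~\ref{binomial-theorem}, applying $L^{(z_k)}$ translates each zero by $z_k$, producing precisely the factorization~\eqref{charpoly}.

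For the explicit formula~\eqref{vkn}, I would again proceed by induction on $k$, with the basis taken at $k=2$ (the case $k=1$ reducing to $(v^{(1)})_n=z_1^n$, which is consistent under the empty-product convention). The key recurrence is
\begin{equation*}
(v^{(k)})_n \;=\; \sum_{i=0}^{n}\binom{n}{i} z_k^{n-i}(r^{(k-1)})_i \;=\; \sum_{i=1}^{n}\binom{n}{i} z_k^{n-i}(v^{(k-1)})_{i-1},
\end{equation*}
since $(r^{(k-1)})_0=0$ and $(r^{(k-1)})_i=(v^{(k-1)})_{i-1}$ for $i\geq 1$. Substituting the inductive hypothesis for $(v^{(k-1)})_{i-1}$ into this identity, and relabelling the outer index $i$ as $h_{k-1}$, gives directly the nested sum in~\eqref{vkn}; in particular, the exponent of $z_{k-1}$ becomes $(h_{k-1}-1)-h_{k-2}=h_{k-1}-h_{k-2}-1$ as claimed.

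The only point requiring a little care is the lower limit of the outer summation. The recursion produces $\sum_{h_{k-1}=1}^{n}$, whereas~\eqref{vkn} starts at $h_{k-1}=k-1$. These agree because the next sum requires $h_{k-2}\geq k-2$ and $h_{k-2}\leq h_{k-1}-1$, so every term with $h_{k-1}<k-1$ is vacuous. I expect this index-bookkeeping to be the only mildly delicate point; no substantive obstacle arises, since the result is essentially the unfolding of a $k$-fold convolution of shifted binomial transforms.
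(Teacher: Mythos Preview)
Your proposal is correct and follows essentially the same approach as the paper: induction on $k$ for the characteristic polynomial via Theorem~\ref{binomial-theorem} and the effect of $\rho$, and then unfolding the recurrence $(v^{(k)})_n=\sum\binom{n}{h_{k-1}}z_k^{n-h_{k-1}}(v^{(k-1)})_{h_{k-1}-1}$ iteratively to obtain~\eqref{vkn}. The only cosmetic difference is the justification of the lower limit $h_{k-1}=k-1$: the paper invokes directly that the first $k-1$ entries of $r^{(k-1)}$ vanish, whereas you argue that the inner sums are empty for $h_{k-1}<k-1$; these are equivalent.
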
 
\begin{proof}
Using an inductive argument, as previously explained, we have $v^{(1)}=L^{(z_1)}(r^{(0)})$, when $k=1$, with characteristic polynomial $f(t)=t-z_1$. If we suppose $v^{(k-1)}=L^{(z_{k-1})}(r^{(k-2)})$ with characteristic polynomial
$$(t - z_{k-1} )(t - z_{k-1}- z_{k-2} ) \cdots (t - z_{k-1}  - z_{k-2}  -  \cdots  - z_1 ),$$
we have $\rho(v^{(k-1)})=r^{(k-1)}$ with characteristic polynomial
$$t(t - z_{k-1} )(t - z_{k-1}- z_{k-2} ) \cdots (t - z_{k-1}  - z_{k-2}  -  \cdots  - z_1 )$$
and finally $v^{(k)}=L^{(z_k)}(r^{(k-1)})$ with characteristic polynomial given by (\ref{charpoly}).
Now let us consider $v^{(k)}$. From Definition \ref{binomial} and Definition \ref{sigma-rho}, observing that the first $k-1$ terms of $v^{(k)}$ are zeros
$$(v^{(k)})_n=\sum\limits_{h_{k - 1}  =k-1}^n {\binom{n}{h_{k - 1} }} z_k^{n - h_{k - 1} } (\rho(v^{(k - 1)}) )_{h_{k - 1} }=
\sum\limits_{h_{k - 1}  =k-1}^n {\binom{n}{h_{k - 1} }} z_k^{n - h_{k - 1} } (v^{(k - 1)} )_{h_{k - 1}-1 }. 
$$
Iterating this process of substitution, for $1\leq j\leq k-1$, we have
$$
(v^{(k - j)} )_{h_{k - j} }  = \sum\limits_{h_{k - j - 1}  = k - j - 1}^{h_{k - j}  - 1} {\binom{h_{k - j}  - 1}{h_{k - j - 1} }} z_{k - j}^{h_{k - j}  - h_{k - j - 1}  - 1} (v^{(k - j - 1)} )_{h_{k - j}  - 1} 
$$
we find easily (\ref{vkn}). 
\end{proof}
\begin{rema}
As an immediate consequence of the previous proposition we observe that for any $a \in\mathcal{W}(R)$ of order 2, with characteristic polynomial $f(t)$, we have  
$$ a_n=\sum_{i=1}^{n}\binom{n}{i}\alpha^{n-i}(\beta-\alpha)^{i-1}=\sum_{i=1}^{n}\binom{n}{i}\alpha^{n-i}(-\sqrt{\Delta})^{i-1}, \ \ \forall n \geq 1, $$
where $\alpha, \beta$ are the zeros of $f(t)$ and $\Delta$ its discriminant.
In fact, the sequence $a$ corresponds to a particular sequence $v^{(2)}$ constructed from $u$ setting $z_2=\alpha, z_1+z_2=\beta$. We also have incidentally retrieved the classical Binet formula
$$ \sum_{i=1}^{n}\binom{n}{i}\alpha^{n-i}(\beta-\alpha)^{i-1}=\cfrac{1}{\beta-\alpha}\sum_{i=0}^{n}\binom{n}{i}\alpha^{n-i}(\beta-\alpha)^{i}-\cfrac{\alpha^n}{\beta-\alpha}=\cfrac{\beta^n-\alpha^n}{\beta-\alpha}\quad.$$
\end{rema}
The processes of \emph{I--construction} and \emph{I--deconstruction} are similar to  \emph{L--construction} and \emph{L--deconstruction}. Starting from $u=r^{(0)}$, and using the operator $I^{(z_1)}$, we obtain the sequence with characteristic polynomial $t-z_1$, which is the same result obtained using $L^{(z_1)}$. Applying $\rho$ we have the sequence $r^{(1)}$, which we can write explicitly as follows:
$$ r^{(1)}=(0,1,z_1,z_1^2,z_1^3,\ldots), $$
and using the operator $I^{(z_2)}$, the resulting sequence
$$ I^{(z_2)}(r^{(1)})=(0,1,z_1,z_1^2+z_2,\ldots) $$
has characteristic polynomial $t(t-z_1)-z_2=t^2-z_1t-z_2$. Repeating or inverting this process we can \emph{I--construct} or \emph{I--deconstruct} any impulsequence of order r. So we have all the necessary tools to transform any impulsequence $a$ into any other impulsequence by means of a convenient composition of operators $\sigma, \rho, I^{(z)}, L^{(z)}$!
\section{Applications to integer sequences}
\subsection{Binomial anti--mean transform}

In this section we study the action of the operator
$L^{(-\frac{h}{2})}$, which we call \emph{Binomial anti--mean
transform} in analogy to the definition of the Binomial mean transform
\cite{Spivey}.

Let $W=W(s_0,s_1,h,k)$ be a linear recurrent sequence of degree 2 over $R$. Assume that its characteristic polynomial is $t^2-ht+k=(t-\alpha)(t-\beta)$ and the initial conditions are $s_0,s_1\in R$. We know that $L^{(y)}(W)$ has characteristic polynomial
$$ t^2-(h+2y)t+y^2+hy+k=(t-(\alpha+y))(t-(\beta+y)). $$
If we consider $y=-\cfrac{h}{2}$ , the binomial anti--mean transform $$L^{(-\frac{h}{2})}(W)=C$$ recurs with polynomial $t^2-\cfrac{\Delta}{4}$ , where $\Delta=h^2-4k$. For the sequence $C$  we have
$$ \begin{cases} C_0=s_0 \cr C_1=\cfrac{\delta}{2} \cr C_n=\cfrac{\Delta^{\lfloor \frac{n}{2}\rfloor}}{2^n}\delta^{n \bmod 2}s_0^{1-n \bmod 2}, \ \ \forall n \geq 2 \end{cases}$$
where $\delta=2s_1-s_0h$. Using Definition \ref{binomial}, we obtain 
$$ \sum_{i=0}^n\binom{n}{i}\left(-\cfrac{h}{2} \right)^{n-i}W_i=C_n.$$
If $s_0=0$, then $C_{2n}=0$ $\forall n$. This is the case of Fibonacci numbers $F=W(0,1,1,-1)$
$$ \sum_{i=0}^{2n}\binom{2n}{i}\left(-\cfrac{1}{2}\right)^{2n-i}F_i=0. $$

\subsection{r--bonacci numbers}
The operator Invert  acts on the startsequence giving
$$I(r^{(0)})=(1,1,1,1,\ldots),$$
with characteristic polynomial $t-1$. Using the operator $\rho$ we obtain
$$r^{(1)}=\rho(I(r^{(0)}))=(0,1,1,1,\ldots),$$ 
with characteristic polynomial $t^2-t$. Now, if we use Invert again, it is easy to observe that the result is $F=F^{(2)}$, the Fibonacci sequence \seqnum{A000045}
$$ I(r^{(1)})=F^{(2)}. $$
The Tribonacci sequence \seqnum{A000073}, $F^{(3)}= (0,0,1,1,2,4,7,13,\ldots)$, has characteristic polynomial $t^3-t^2-t-1$ and initial conditions $(0,0,1)$. So applying the operators $\rho$ and $I$ to the Fibonacci sequence we obtain the Tribonacci sequence 
$$ I(\rho(F^{(2)}))=F^{(3)}. $$
 From the Tribonacci sequence it is possible to obtain the Tetranacci sequence \seqnum{A000078} $F^{(4)}=(0,0,0,1,1,2,4,8,\ldots)$, as follows:
$$I(\rho(F^{(3)}))=F^{(4)}. $$
In general, let us start from the $r$--bonacci sequence $F^{(r)}$, whose recurrence polynomial is $t^r-t^{r-1}-t^{r-2}-\cdots-1$ and initial conditions are the \emph{r--impulse}. If we compose the operators $\rho$ and $I$, by Definition \ref{sigma-rho} and by (\ref{If}), it is simple to observe that the new sequence has characteristic polynomial $t^{r+1}-t^{r}-t^{r-1}-\cdots-1$ and initial conditions the $(r+1)$--\emph{impulse}, i.e., we obtain the $(r+1)$--bonacci sequence $F^{(r+1)}=I(\rho(F^{(r)}))$. Moreover, by Remark \ref{comb}, we can see that the $(r+1)$--bonacci numbers are the complete ordinary Bell polynomials of the $r$--bonacci numbers
$$ F^{(r+1)}_n=B_{n+1}(0,F^{(r)}_0,F^{(r)}_1,\ldots,F^{(r)}_n)\quad. $$
Finally, by Remark \ref{invertrec} we can use (\ref{invertreceq}), with $x=1$, and we find another relation between $r$--bonacci numbers
$$ F_{n+1}^{(r)}=F_n^{(r-1)}+\sum_{i=1}^{n-1}F_{i-1}^{(r)}F_{n-i-1}^{(r-1)}\quad. $$

\subsection{Polynomial sequences}
If we consider $f:\mathbb N\rightarrow \mathbb C$ and $d\in \mathbb N$ we have the following equivalent conditions \cite{Stanley}
\begin{itemize}
\item \begin{equation}
\sum\limits_{n \ge 0} {f(n)t^n }  = \frac{{P(t)}}{{(1 - t)^{d + 1} }},
\end{equation}
where $P(t)$ is a polynomial over $\mathbb C$ with $\deg(P)\leq d$
\item $\forall m\geq0$
\begin{equation}\label{recdelta}
\Delta ^{d + 1} f(m) = \sum\limits_{i = 0}^{d + 1} {( - 1)^{d + 1 - i} } \left( {\begin{array}{*{20}c}
   {d + 1}  \\
   i  \\
\end{array}} \right)f(m + i) = 0,
\end{equation}
\item $f(n)$ is a polynomial function of $n$, $\deg(f(n))\leq d$ and $\deg(f(n))=d\Leftrightarrow P(1)\neq0$. 
\end{itemize}
It is well known that it is possible to expand $f(n)$ in terms of the basis $n^i$, $0\leq i\leq d$, but an alternative way is to use the basis $\binom{n}{i}$, $ 0\leq i\leq d$.
In this case 
\begin{equation}\label{binfn}
f(n) = \sum\limits_{i = 0}^d {(\Delta ^i f(0)} )\left( {\begin{array}{*{20}c}
   n  \\
   i  \\
\end{array}} \right).
\end{equation}
Now we see another point of view to obtain (\ref{binfn}), based on the $L^{(y)}$ operator.
Let us consider a polynomial $f(t)$ over $\mathbb C$ such that $\deg(f)\leq d$. From (\ref{recdelta}), where we set $m=0$, we conclude that the characteristic polynomials of $$(f(n))=(f(0), f(1),f(2),\ldots)$$ and $$(\Delta ^n f(0))=(\Delta ^0 f(0),\Delta ^1 f(0)\Delta ^2 f(0),\ldots) $$ are, respectively, $(t-1)^{d+1}$ and $t^{d+1}$. Using (\ref{Lf}) it is clear that $$L^{( - 1)} ((t - 1)^{d + 1} ) = t^{d + 1}.$$ Consequently, we have the following \emph{one-click deconstruction}
$$L^{( - 1)} ((f(n)) ) = (\Delta ^n f(0)), $$
which implies
$$L^{(1)} ((\Delta ^n f(0))) = L((\Delta ^n f(0)) ) = (f(n)) $$
and directly gives (\ref{binfn}).

\subsection{Pyramidal numbers}

Let us consider the sequences $$(0,1,q-1,2q-3,3q-5,\ldots,1+(q-2)n,\ldots)\quad.$$ For all $q\geq2$, these sequences have characteristic polynomial $(t-1)^2$. Their partial sums are the \emph{polygonal numbers}. For example, when $q=3$, we have the triangular numbers $(0,1,3,6,10,15,\ldots)$ \seqnum{A000217}, when $q=4$ the square numbers $(0,1,4,9,16,\ldots)$ \seqnum{A000290}, and so on. The polygonal numbers have characteristic polynomial $(t-1)^3$. The $n$th term of such sequences is given by
\begin{equation}\label{pn}P^{(2)}_q(n)=
\cfrac{1}{2}(q-2)n^2+\cfrac{1}{2}(4-q)n .\end{equation}
If we calculate the partial sums of polygonal numbers, we  obtain the \emph{pyramidal numbers}. Their characteristic polynomial is $(t-1)^4$, and the $n$th term of such sequences is given by $P^{(3)}_q(n)$, a third degree polynomial. Proceeding with these partial sums, we can define pyramidal numbers for higher dimensions. The \emph{figurate numbers}, introduced in Remark \ref{fignum}, are a particular case of these numbers. These sequences recur with a polynomial $(t-1)^{d+1}$ and the terms of these sequences can be evaluated by $P^{(d)}_q(n)$, a polynomial of degree $d$. So we can apply the \emph{one--click deconstruction}.

For example, let us consider polygonal numbers $P^{(2)}_q(n)$. As we have observed, the polygonal numbers recur all with polynomial $(t-1)^3$, and equality (\ref{pn}) holds.
If we think of (\ref{pn}) as a function $f(n)$ in $n$, then we can apply the \emph{one--click deconstruction} to the polygonal numbers
$$ L^{(-1)}((P^{(2)}_q(n)))=(0,1,q-2,0,0,0,\ldots)\quad. $$
Indeed, in this case $\Delta^0f(0)=0$, $\Delta f(0)=1$, $\Delta^2f(0)=q-2$ \quad.
Consequently
\begin{equation} \label{Pn2} L^{(1)}((0,1,q-2,0,0,0,\ldots))=(P_q^{(2)}(n))\quad.\end{equation}
In the same way, starting from $1$, we obtain
\begin{equation} \label{Pn3} L^{(1)}((1,q-1,q-2,0,0,0,\ldots))=(P_q^{(2)}(n)), \quad n \geq 1 \quad . \end{equation}
The OEIS \cite{Sloane} shows special cases of (\ref{Pn2}) and (\ref{Pn3}), without proof. 
See for $q=4$ Sloane's \seqnum{A000566} (Paul Barry, 2003), and for $q=4,5,6,8,9,10$ Sloane's \seqnum{A000290},  \seqnum{A000326}, \seqnum{A000384}, \seqnum{A000567}, \seqnum{A001106}, \seqnum{A001107} (Gary W. Adamson, 2007-2008).

\bigskip
\hrule
\bigskip
\noindent 2000 {\it Mathematics Subject Classification}: Primary 11B37;
Secondary 11B39.

\noindent \emph{Keywords:}  Bell polynomials, binomial operator,
Fibonacci numbers, impulse sequences, invert operator, pyramidal
numbers, recurrent sequences.

\bigskip
\hrule
\bigskip

\noindent (Concerned with sequences 
\seqnum{A000045},
\seqnum{A000073},
\seqnum{A000078},
\seqnum{A000217},
\seqnum{A000290},
\seqnum{A000292},
\seqnum{A000326},
\seqnum{A000384},
\seqnum{A000566},
\seqnum{A000567},
\seqnum{A001106}, and
\seqnum{A001107}.)

\bigskip
\hrule
\bigskip

\vspace*{+.1in}
\noindent
Received September 14 2009;
revised versions received  February 15 2010; March 9 2010; June 29
2010; July 10 2010.
Published in {\it Journal of Integer Sequences},
July 16 2010.

\bigskip
\hrule
\bigskip

\noindent
Return to
\htmladdnormallink{Journal of Integer Sequences home page}{http://www.cs.uwaterloo.ca/journals/JIS/}.
\vskip .1in

\end{document}